\newtheorem{theorem}{Theorem}[section]
\newtheorem{lemma}[theorem]{Lemma}
\theoremstyle{definition}
\numberwithin{equation}{section}
\begin{document}

\baselineskip=15.5pt

\title[Turbulent foliations on complex tori and transversely Cartan geometry]{Turbulent
holomorphic foliations on compact complex tori and transversely holomorphic Cartan geometry}

\author[I. Biswas]{Indranil Biswas}

\address{Department of Mathematics, Shiv Nadar University, NH91, Tehsil Dadri,
Greater Noida, Uttar Pradesh 201314, India}

\email{indranil.biswas@snu.edu.in, indranil29@gmail.com}

\author[S. Dumitrescu]{Sorin Dumitrescu}

\address{Universit\'e C\^ote d'Azur, CNRS, LJAD, France}

\email{dumitres@unice.fr}

\subjclass[2020]{32S65, 53C10, 53C30}

\keywords{Holomorphic foliation, compact complex torus,  transversely complex projective structure, Cartan geometry,  transversely Cartan geometry}

\date{}

\begin{abstract}
We define a class of nonsingular holomorphic foliations on compact complex tori which generalizes (in higher codimension) 
the turbulent foliations of codimension one constructed by Ghys in \cite{Gh}. For those smooth turbulent foliations we prove that all transversely holomorphic Cartan geometries are flat. We also  establish  a uniqueness result for the transversely holomorphic Cartan geometries.

\end{abstract}

\maketitle

\section{Introduction}

This article studies transversely holomorphic Cartan geometries on nonsingular holomorphic foliations on compact complex 
tori (i.e., compact complex manifolds which are biholomorphic to a quotient ${\mathbb C}^d / \Lambda$, with $\Lambda$ being 
a normal lattice in ${\mathbb C}^d$, with $d\,\geq\, 2$).

A first motivation of our study comes from the following classification result for nonsingular codimension one holomorphic 
foliations on compact complex tori, proved by Ghys in \cite{Gh}: Any nonsingular {\it codimension one} holomorphic foliation 
on a compact complex torus is either {\it linear} (meaning it is defined as the kernel of a global nonzero holomorphic 
one-form on the torus) or it is {\it turbulent}.

A codimension one holomorphic foliation on a compact complex torus $A$ is {\it turbulent} in Ghys' sense if it is defined by 
the kernel of a closed {\it meromorphic} one-form constructed as follows.  Assume that there exists a holomorphic submersion 
$\pi \,:\, A \,\longrightarrow\, T$, over an elliptic curve $T$ and consider the closed meromorphic one-form $\eta \,=\, 
(\pi^*\omega) + \beta$, where $\omega$ is a meromorphic one-form on $T$ and $\beta$ is a (translation invariant) holomorphic 
one-form on the torus $A$ that does not vanish on the fibers of the fibration $\pi$. The kernel of $\eta$ defines (away from 
the polar divisor of $\eta$) a holomorphic codimension one foliation which extends to a nonsingular holomorphic foliation on 
the torus $A$ (see \cite{Gh}). Notice that in the case where the holomorphic one-form $\beta$ vanishes on the fibers of 
$\pi$, the turbulent foliation degenerates to the fibration $\pi$.

The dynamics of the turbulent foliation was described in \cite{Gh} (see also \cite{Br2}): the inverse image through the 
fibration $\pi$ of the polar divisor of the meromorphic one-form $\omega$ is a finite union of compact leafs and all other 
leafs are noncompact and accumulate on every compact leaf.

Codimension one linear foliations on compact complex tori --- being defined by a holomorphic one-form $\beta$ (which is 
automatically closed and translation invariant) --- are necessarily translation invariant and admit a transversely complex 
translation structure. The local (transverse) charts of the corresponding transversely complex translation structure are 
given by the local primitives of $\beta$ (seen as local submersions $f_i \,:\, U_i \,\longrightarrow \,\mathbb C$
that locally define the foliation): the transition maps between two such local charts is necessarily given by the
post-composition with a 
translation.  More precisely, for each connected component of a nonempty intersection $U_i \cap U_j$ there exists a constant 
$c_{ij}$ such that $f_i\,=\,f_j+c_{ij}$.

Recall that Lie's classification of dimension one homogeneous spaces implies that the possible transverse structures for a 
codimension one foliation are exactly of the following three types: translation structures, affine structures and projective 
structures. Codimension one holomorphic foliations which are transversely complex affine or transversely complex projective 
were studied by several authors (see, \cite{Se, Sc, LP}).

In this context it was proved in \cite{BD3} that there exists codimension one turbulent foliations on compact complex tori 
which do not admit any transversely complex projective structure. More precisely, the main result in \cite{BD3} asserts that 
on the product of two elliptic curves, a generic smooth (nonsingular) turbulent foliation does not admit any transversely complex projective 
structure \cite{BD3}. Indeed, for a polar part of the above meromorphic one-form $\omega$ of degree $ \,\geq\, 8$, generic 
turbulent foliations do not admit any transversely complex projective structure \cite{BD3}. The proof of \cite{BD3} employed 
a dimension counting argument, based on a proposition proving that smooth turbulent codimension one foliations admit at most 
one transversely complex projective structure.  To the best of our knowledge these were the first known examples of 
nonsingular codimension one holomorphic foliations on a complex projective manifold admitting no transversely complex 
projective structures. Recall also that all smooth fibrations (submersions) over a Riemann surface admit a nonsingular 
transversely projective structure which is a consequence of the uniformization theorem for Riemann surfaces applied to the 
base of the fibration (which happens to be here a global transversal of the foliation).

It should be emphasized that those results in \cite{BD3} deal with {\it regular} (meaning {\it nonsingular}) complex 
projective structures. Indeed, codimension one turbulent foliations on complex tori, being defined by global closed 
meromorphic one-forms $\eta$, admit a {\it singular} transversely complex translation structure, which in turn induces 
a singular transversely complex projective structure in the sense of \cite{LPT}.  Moreover, a consequence of 
Brunella's classification of nonsingular holomorphic foliations on compact complex surfaces, \cite{Br1},
says that all those  foliations admit a singular transversely complex projective structure.

A recent result of Fazoli, Melo and Pereira, \cite[Theorem 4.4]{FMP}, classifies all smooth
(nonsingular) turbulent foliations on K\"ahler 
surfaces which do admit a regular (nonsingular) complex projective structure.

Our article here deals with smooth (nonsingular) holomorphic turbulent foliations, with arbitrary codimension, on compact 
complex tori (this notion is defined in Section \ref{section turbulent}). Our main result is that for such a smooth 
turbulent foliation, all transversely holomorphic Cartan geometries are {\it flat} (see Theorem \ref{thm1}).  Recall that 
this means that the foliation is transversely modelled on a complex homogeneous space $G/H$, with $G$ a complex Lie group 
and $H$ a closed subgroup in $G$.  It is equivalent to say that in an adapted holomorphic foliated atlas, the foliation is 
locally given by the level sets of local submersions to the complex homogeneous space $G/H$ which differ on the overlaps of 
two open sets in the atlas by the post-composition with an element in $G$ acting on $G/H$.  The one dimensional homogeneous 
space ${\rm P^1}(\mathbb C)$ acted on by $G\,=\,{\rm {\rm PSL}}(2, \mathbb C)$ (and with $H$ being a maximal parabolic subgroup 
in $G$) corresponds to transversely complex projective structures (for foliations of codimension one). It should be 
emphasized that the codimension one is particular since all Cartan geometry are flat in dimension one and, consequently,  all 
transversely Cartan geometries are automatically flat for codimension one foliations. In general, this is not the case 
anymore in higher codimension.

Given a holomorphic principal $H$--bundle $E_H$ over a compact complex torus $A$  equipped with a flat partial connection in the direction of a smooth holomorphic  turbulent foliation $\mathcal F$, we also prove in Theorem \ref{thm1} 
that for any given homogeneous model space $G/H$, the foliated torus $(A, \mathcal F)$  admits at most 
one compatible transversely holomorphic $G/H$-structure. This generalizes to turbulent foliations --- of arbitrary codimension and to any 
transversely holomorphic Cartan geometry --- the uniqueness result proved in \cite{BD3} for codimension one foliations and 
transversely complex projective structures.

Consider, for example, on a compact complex torus $A$ a codimension one smooth  holomorphic turbulent foliation $\mathcal F$ which 
admits a transversely complex projective structure (for tori of complex dimension two, those foliations were completely 
characterized in \cite[Theorem 4.4]{FMP}). Then the product $A \times A$ inherits a codimension two holomorphic foliation 
given by $\mathcal F \oplus \mathcal F$ which is transversely modelled on ${\rm P^1} (\mathbb C) \times {\rm P^1} (\mathbb 
C)$. As a particular case of our Theorem \ref{thm1} this transversely ${\rm P^1} (\mathbb C) \times {\rm P^1} (\mathbb 
C)$-structure is unique (up to the choice of a natural principal bundle endowed with a flat partial connection along the foliation). Moreover, any transversely holomorphic Cartan geometry with infinitesimal model $G={\rm 
PSL}(2,\mathbb C) \times {\rm PSL}(2,\mathbb C)$ and $H= B \times B$, where $B$ is the maximal parabolic subgroup in ${\rm PSL}(2,\mathbb C),$ is 
flat and defines a transversely ${\rm P^1} (\mathbb C) \times {\rm P^1} (\mathbb C)$-structure.

Moreover, Theorem \ref{thm1} stands true also for branched Cartan geometries. Therefore one is allowed to apply Theorem 
\ref{thm1} to transversely Cartan geometries having mild singularities (of {\it branched type}): the notion of {\it branched 
Cartan geometry} was introduced and studied in \cite{BD} (see also the survey \cite{BD2}).

A natural question left unsolved is to classify all smooth turbulent foliations on compact complex tori that admit a 
transversely holomorphic (or branched) Cartan geometry. Theorem \ref{thm1} proves that the transversely Cartan geometry 
must be flat.

Let us mention some  related works studying turbulent foliations. A more general notion of a possible  singular turbulent foliation was defined and studied  in \cite{Br2}, where Brunella 
classified all (possibly singular) codimension one  foliations on compact complex tori (see also \cite{PS} for a study of  the space of all possible singular turbulent foliations  having a fixed tangency divisor with respect to  a given elliptic fibration). A general study of smooth  holomorphic foliations on compact homogeneous K\"ahler manifolds was carried out in \cite{LoP}.

The organization of this article  is as follows. Section \ref{section turbulent}  introduces a new class of subbundles (of arbitrary rank)  in the holomorphic  tangent bundle $TA$ of a compact complex torus $A$. Those, so-called, {\it generating} subbundles are in some sense  the opposite of the translation invariant subbundles. In particular, the corank one integrable  generating   subbundles in $TA$  coincide with  the tangent spaces of Ghys turbulent foliations. We  prove vanishing properties for generating subbundles  (see Lemma \ref{lem1}) which will be  applied in the sequel  to the tangent spaces of smooth turbulent foliations. Section \ref{section foliation} defines a smooth turbulent foliation on a compact complex torus as determined by an integrable generating subbudle in $TA$. We present the framework of transversely Cartan geometry and transversely  branched Cartan geometry as introduced and developed in \cite{BD,BD2}.  Section \ref{section rigidity} contains the proof of the main result, Theorem \ref{thm1}.

\section{Generating distributions on a torus} \label{section turbulent}

Let $A$ be a compact complex torus of complex dimension $d$. Denote by $TA$ the
holomorphic tangent bundle of $A$. Consider the trivial holomorphic
vector bundle $A\times H^0(A,\, TA)\, \longrightarrow\, A$ with fiber $H^0(A,\, TA)$. Let
\begin{equation}\label{e1}
\Phi\ :\ A\times H^0(A,\, TA)\ \longrightarrow\ TA
\end{equation}
be the evaluation map that sends any $(x,\, s)\, \in\, A\times H^0(A,\, TA)$ to $s(x)\, \in\, T_xA$.
This homomorphism $\Phi$ is an isomorphism, because $TA$ is holomorphically trivial.

Take a holomorphic subbundle
\begin{equation}\label{e2}
{\mathcal F}\ \subset\ TA
\end{equation}
of rank $r$ with $1\,\leq\, r\, \leq\, d-1$. So $\Phi^{-1}({\mathcal F})\ \subset\ A\times H^0(A,\, TA)$
is a holomorphic subbundle, where $\Phi$ is the isomorphism in \eqref{e1}. For each $x\, \in\, A$, consider
the subspace $$\Phi^{-1}({\mathcal F})_x \, \subset\, (A\times H^0(A,\, TA))_x \,=\, H^0(A,\, TA).$$
Let
\begin{equation}\label{e3}
G({\mathcal F})\ := \ \text{Span}\, \{\Phi^{-1}({\mathcal F})_x\}_{x\in A}\ \subset\
H^0(A,\, TA)
\end{equation}
be the subspace of $H^0(A,\, TA)$ generated by its subspaces $\{\Phi^{-1}({\mathcal F})_x\}_{x\in A}$.

The subbundle $\mathcal F$ in \eqref{e2} will be called \textit{generating} if
$G({\mathcal F})\ =\ H^0(A,\, TA)$, where $G({\mathcal F})$ is constructed in \eqref{e3}.

For a subbundle ${\mathcal F}\ \subset\ TA$ as in \eqref{e2}, let
\begin{equation}\label{e4}
{\mathcal N}\ :=\ (TA)/{\mathcal F}
\end{equation}
be the quotient bundle. Let
\begin{equation}\label{e4b}
q\ :\ TA \ \longrightarrow\ (TA)/{\mathcal F}\ =: \ {\mathcal N}
\end{equation}
be the quotient map.

Fix a K\"ahler form $\omega_A$ on $A$ in order to define the notion of  {\it degree}  for  torsionfree coherent
analytic sheaves on $A$. So, for a torsionfree coherent
analytic sheaf $E$ on $A$,
$$
\text{degree}(E)\ :=\ \int_A c_1(E)\wedge\omega^{d-1}_A \ \in\ {\mathbb R}.
$$
For a holomorphic vector bundle $V$ on $A$, let
$$
0\ =\ V_0\ \subset\ V_1\ \subset\ \cdots \ \subset\ V_{\ell-1} \ \subset\ V_\ell\ =\ V
$$
be the Harder--Narasimhan filtration of $V$ (see \cite[p.~16, Theorem 1.3.4]{HL}
for Harder--Narasimhan filtration). Then
\begin{equation}\label{e5}
\mu_{\rm max}(V)\ :=\ \frac{\text{degree}(V_1)}{\text{rank}(V_1)},\ \ \ \,
\mu_{\rm min}(V)\ :=\ \frac{\text{degree}(V/V_{\ell-1})}{\text{rank}(V_/V_{\ell-1})}.
\end{equation}

\begin{lemma}\label{lem1}
Let $\mathcal F$ as in \eqref{e2} be a generating subbundle. Let $V$ be a holomorphic
vector bundle on $A$ with $\mu_{\rm max}(V)\, \leq\, 0$ (see \eqref{e5}). Then
$$
H^0(A,\, V\otimes \bigwedge\nolimits^j{\mathcal N}^*)\ =\ 0
$$
for all $j\, \geq\, 1$, where $\mathcal N$ is constructed in \eqref{e4}.
\end{lemma}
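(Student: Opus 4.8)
The plan is to convert the combinatorial \emph{generating} condition into a numerical positivity statement about the normal bundle $\mathcal N$, and then read off the vanishing from a slope estimate. The key object is the dual sequence of \eqref{e2}, namely $0\,\longrightarrow\, \mathcal N^*\,\longrightarrow\,\Omega^1_A\,\longrightarrow\,\mathcal F^*\,\longrightarrow\, 0$, where $\Omega^1_A\,=\,(TA)^*$ is the cotangent bundle. Since $TA$ is trivial, so is $\Omega^1_A$, with $H^0(A,\,\Omega^1_A)\,=\,H^0(A,\,TA)^*$. First I would record the baby case: a section of $\mathcal N^*\,\subset\,\Omega^1_A$ is a constant (translation invariant) one-form $\alpha$ with $\alpha_x\,\in\,\mathcal N^*_x\,=\,\mathrm{Ann}(\mathcal F_x)$ for every $x$, hence $\alpha$ annihilates $\mathrm{Span}_x\,\Phi^{-1}(\mathcal F)_x\,=\,G(\mathcal F)$; the hypothesis $G(\mathcal F)\,=\,H^0(A,\,TA)$ then forces $\alpha\,=\,0$, so that $H^0(A,\,\mathcal N^*)\,=\,\text{Hom}(\mathcal N,\,\mathcal O_A)\,=\,0$.

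Next I would upgrade this to the genuinely useful estimate $\mu_{\rm min}(\mathcal N)\,>\,0$. Because $\mathcal N\,=\,(TA)/\mathcal F$ is a quotient of the trivial bundle $TA$, it is globally generated, so every quotient of $\mathcal N$ is nef and hence of nonnegative degree; thus $\mu_{\rm min}(\mathcal N)\,\geq\, 0$. Suppose, for contradiction, that $\mu_{\rm min}(\mathcal N)\,=\,0$, and let $\mathcal N\,\twoheadrightarrow\, Q$ be the last quotient of the Harder--Narasimhan filtration, so that $Q$ is semistable of slope $0$ and globally generated. The hard part will be the following sub-claim: \emph{a globally generated semistable vector bundle of degree $0$ on $A$ is trivial}. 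I would prove this by induction on the rank: a nonzero section produces a line subbundle which, being of degree $0$ with a section, is $\cong \mathcal O_A$ and (by semistability) saturated; the quotient is again globally generated, semistable of degree $0$, hence trivial by induction; and the resulting extension $0\to \mathcal O_A\to Q\to \mathcal O_A^{\oplus k}\to 0$ must split, since a nonzero extension class makes the connecting map $H^0(\mathcal O_A^{\oplus k})\to H^1(A,\,\mathcal O_A)$ nonzero and thereby obstructs global generation. Granting this, $Q\,\cong\,\mathcal O_A^{\oplus k}$ with $k\,\geq\, 1$, so the composite $\mathcal N\,\twoheadrightarrow\, Q\,\twoheadrightarrow\,\mathcal O_A$ is a nonzero element of $\text{Hom}(\mathcal N,\,\mathcal O_A)\,=\,H^0(A,\,\mathcal N^*)$, contradicting the previous paragraph. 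Hence $\mu_{\rm min}(\mathcal N)\,>\,0$.

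Finally I would conclude by a purely numerical computation. From $\mu_{\rm min}(\mathcal N)\,>\,0$ one gets $\mu_{\rm max}(\mathcal N^*)\,=\,-\mu_{\rm min}(\mathcal N)\,<\,0$. Using the standard inequalities $\mu_{\rm max}(\bigwedge^j E)\,\leq\, j\,\mu_{\rm max}(E)$ and the additivity $\mu_{\rm max}(E_1\otimes E_2)\,=\,\mu_{\rm max}(E_1)+\mu_{\rm max}(E_2)$ (valid over $\mathbb C$ because tensor products of semistable bundles are semistable), one finds, for every $j\,\geq\, 1$,
$$
\mu_{\rm max}\bigl(V\otimes \textstyle\bigwedge^j\mathcal N^*\bigr)\ \leq\ \mu_{\rm max}(V)+j\,\mu_{\rm max}(\mathcal N^*)\ =\ \mu_{\rm max}(V)-j\,\mu_{\rm min}(\mathcal N)\ <\ 0,
$$
since $\mu_{\rm max}(V)\,\leq\, 0$ by hypothesis. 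A holomorphic bundle $E$ with $\mu_{\rm max}(E)\,<\,0$ admits no nonzero section, because a nonzero section gives an inclusion $\mathcal O_A\,\hookrightarrow\, E$ of a slope-$0$ subsheaf, forcing $\mu_{\rm max}(E)\,\geq\, 0$. Therefore $H^0(A,\,V\otimes\bigwedge^j\mathcal N^*)\,=\,0$ for all $j\,\geq\, 1$, as required.

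I expect the main obstacle to be the strict inequality $\mu_{\rm min}(\mathcal N)\,>\,0$ in the second paragraph. The difficulty is that the spanning in the definition of \emph{generating} takes place across \emph{different} fibers of $TA$, so it cannot be exploited by a fiberwise linear-algebra argument; it must instead be globalized into the statement that $\mathcal N$ has no quotient of slope $\leq 0$, and this is exactly where the triviality of globally generated semistable degree-$0$ bundles on the torus is needed. Everything else is formal once this positivity is in hand; in particular the slope reduction makes it unnecessary to treat the exterior powers $\bigwedge^j\mathcal N^*$ one at a time.
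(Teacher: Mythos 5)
Your proof is correct, and it shares the paper's skeleton --- both arguments reduce the lemma to the strict inequality $\mu_{\rm min}(\mathcal N) \,>\, 0$ and then finish with the same slope arithmetic on $V\otimes\bigwedge\nolimits^j\mathcal N^*$ --- but the key step extracting that positivity from the generating hypothesis is carried out by a genuinely different mechanism. The paper assumes the last Harder--Narasimhan quotient $\mathcal N/\mathcal N_{m-1}$ has degree $0$, chooses a subspace $W\subset H^0(A,\,TA)$ mapping onto it at one point, and uses a determinant-divisor argument (an effective divisor of degree zero is empty) to show that $A\times W\to \mathcal N/\mathcal N_{m-1}$ is an isomorphism; the kernel $\mathcal K$ of $TA\to \mathcal N/\mathcal N_{m-1}$ is then a direct summand of $TA$, hence trivial by Atiyah's Krull--Schmidt theorem, and since $\mathcal F\subset\mathcal K$ the generating property forces $\mathcal K\,=\,TA$, a contradiction. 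You instead dualize the generating condition once and for all into $H^0(A,\,\mathcal N^*)\,=\,0$ (an exact reformulation, since all global one-forms on a torus are translation invariant), and you derive the contradiction from a standalone structure lemma: a globally generated semistable degree-zero bundle is trivial, proved by induction on the rank using nowhere-vanishing sections and the splitting of extensions forced by global generation via the connecting homomorphism. Both arguments in effect show that the minimal destabilizing quotient would be trivial; yours then produces a nonzero section of $\mathcal N^*$, while the paper contradicts generation directly through $\mathcal K$. What your route buys: the role of the hypothesis becomes transparent, the triviality lemma is a reusable fact valid on any compact K\"ahler manifold, and you avoid both the determinant-of-a-homomorphism construction and Atiyah's theorem on summands of trivial bundles. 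What the paper's route buys: it works only with evaluation maps and divisors, with no cohomological extension-class argument. One compressed step in your induction deserves care: to see that the image of a nonzero section of the semistable degree-zero bundle $Q$ is actually a subbundle, pass to the saturation $L$ of the image, note $0\,\le\, \deg L\,\le\, 0$ (effectivity, then semistability), and use that a degree-zero line bundle with a nonzero section is trivial with nowhere-vanishing section, so the image was already saturated; your parenthetical ``(by semistability) saturated'' is the right idea but needs this justification to rule out vanishing in codimension at least two.
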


\begin{proof}
We will first prove that
\begin{equation}\label{e6}
\mu_{\rm min}({\mathcal N})\ > \ 0
\end{equation} 
(see \eqref{e5}). Let
$$
0\ =\ {\mathcal N}_0\ \subset\ {\mathcal N}_1\ \subset\ \cdots \ \subset\ {\mathcal N}_{m-1} \ \subset\
{\mathcal N}_m\ =\ {\mathcal N}
$$
be the Harder--Narasimhan filtration of $\mathcal N$. Since ${\mathcal N}$ is a quotient of
$TA$ (see \eqref{e4b}), and ${\mathcal N}/{\mathcal N}_{m-1}$ is a quotient of $\mathcal N$, it follows
that ${\mathcal N}/{\mathcal N}_{m-1}$ is a quotient of $TA$. As $TA$ is semistable of degree zero,
its quotient ${\mathcal N}/{\mathcal N}_{m-1}$ has the following property:
\begin{equation}\label{e-1}
\mu_{\rm min}({\mathcal N}) \ =\
\frac{\text{degree}({\mathcal N}/{\mathcal N}_{m-1})}{\text{rank}({\mathcal N}/{\mathcal N}_{m-1})}
 \ \geq\ 0.
\end{equation}

In view of \eqref{e-1}, to prove \eqref{e6} by contradiction, assume the following:
\begin{equation}\label{e6b}
\text{degree}({\mathcal N}/{\mathcal N}_{m-1}) \ = \ 0.
\end{equation}
If the rank of ${\mathcal N}/{\mathcal N}_{m-1}$ is $b$, choose a subspace $W\, \subset\, H^0(A,\, TA)$
of dimension $b$ such  that the composition of maps
\begin{equation}\label{e6d}
A\times W \ \stackrel{\sigma}{\longrightarrow}\ TA \ \longrightarrow\ {\mathcal N}/{\mathcal N}_{m-1}
\end{equation}
is surjective over some point $x_0$ of $A$;
the homomorphism $\sigma$ in \eqref{e6d} is the evaluation
map that sends any $(x,\, w)\, \in\, A\times W$ to $w(x)\, \in\, T_xA$ (see \eqref{e1}).
Consequently, the composition of maps in \eqref{e6d} is surjective over a Zariski 
open subset $U$ of $A$ containing $x_0$. Let
\begin{equation}\label{e6c}
\varphi\ :\ {\mathcal W}\ :=\ A\times W\ \longrightarrow\ {\mathcal N}/{\mathcal N}_{m-1}
\end{equation}
be the composition of maps in \eqref{e6d}. It induces a homomorphism between the
determinant line bundles 
$$
\det\varphi\ :\ \det {\mathcal W}\,=\,
\bigwedge\nolimits^b {\mathcal W}\ \longrightarrow\ \det ({\mathcal N}/{\mathcal N}_{m-1})
$$
(see \cite[Ch.~V, \S~6]{Ko} for the construction of the determinant line bundle).
Since $\det\varphi$ does not vanish on the open subset $U$ over which $\varphi$ is surjective, it follows that
$$
\text{degree}(\det ({\mathcal N}/{\mathcal N}_{m-1})) - \text{degree}(\det {\mathcal W})\ =\
\text{degree}(\text{Divisor}(\det\varphi)).
$$
Therefore, we have $\text{degree}(\text{Divisor}(\det\varphi))\ =\
\text{degree}(\det ({\mathcal N}/{\mathcal N}_{m-1}))$, because ${\mathcal W}$ is a trivial vector bundle
(see \eqref{e6c}). Hence from \eqref{e6b} we have $\text{degree}(\text{Divisor}(\det \varphi))\,=\, 0$.
But this implies that $\text{Divisor}(\det \varphi)\,=\, 0$ (note that $\text{Divisor}(\det \varphi)$ is
an effective divisor). Since $\text{Divisor}(\det \varphi)\,=\, 0$, it follows that
$\varphi$ is an isomorphism. Consequently,
${\mathcal N}/{\mathcal N}_{m-1}$ is a trivial vector bundle.

As noted before, ${\mathcal N}/{\mathcal N}_{m-1}$ is a quotient of $TA$. Let
$$
{\mathcal K}\ \subset\ TA
$$
be the kernel of the quotient map $TA \, \longrightarrow\, {\mathcal N}/{\mathcal N}_{m-1}$, so
we have a short exact sequence
\begin{equation}\label{e7}
0\, \longrightarrow\, {\mathcal K}\, \longrightarrow\, TA\, \longrightarrow\,
{\mathcal N}/{\mathcal N}_{m-1}\, \longrightarrow\, 0.
\end{equation}
Note that we have
\begin{equation}\label{e8}
{\mathcal F}\ \subset\ {\mathcal K},
\end{equation}
because ${\mathcal N}/{\mathcal N}_{m-1}$ is a quotient of $(TA)/{\mathcal F}\,=\, \mathcal N$
(see \eqref{e4}). Since $\varphi$ is an isomorphism, we have
$$
TA \ = \ {\mathcal K}\oplus \sigma ({\mathcal W})
$$
(see \eqref{e6d} for $\sigma$). As $TA$ is a trivial vector bundle, and $\mathcal K$ is a direct summand of
it, we conclude that $\mathcal K$ is also a trivial vector bundle \cite[p.~315, Theorem 3]{At1}.

Since $\mathcal K$ is trivial, it follows that $H^0(A,\, {\mathcal K})$ generates $\mathcal K$
using the evaluation map (see \eqref{e1}).
Hence from \eqref{e8} it follows that $\mathcal F$ is contained in the subsheaf of $TA$ generated
by $H^0(A,\, {\mathcal K})\, \subset\, H^0(A,\, TA)$. Since $\mathcal F$ is generating, this implies
that ${\mathcal K}\,=\, TA$. Hence from \eqref{e7} it follows that ${\mathcal N}/{\mathcal N}_{m-1}\,
=\, 0$.

But this is a contradiction because ${\mathcal N}/{\mathcal N}_{m-1}\, \not=\, 0$. Since
\eqref{e6b} led to this contradiction, we conclude that \eqref{e6} holds.

{}From \eqref{e6} it follows that
$$
\mu_{\rm min}(\bigwedge\nolimits^j {\mathcal N})\ \geq \ j\cdot\mu_{\rm min}({\mathcal N})\ > \ 0
$$
for all $1\, \leq\, j\, \leq\, \text{rank}({\mathcal N}) \,=\, d-r$. Hence we have
\begin{equation}\label{e9}
\mu_{\rm max}(\bigwedge\nolimits^j {\mathcal N}^*)\ =\ -
\mu_{\rm min}(\bigwedge\nolimits^j {\mathcal N}) \ < \ 0
\end{equation}
for all $1\, \leq\, j\, \leq\, d-r$.

The given condition that $\mu_{\rm max}(V)\, \leq\, 0$ and \eqref{e9} combine together to imply
that
$$
\mu_{\rm max}(V\otimes \bigwedge\nolimits^j{\mathcal N}^*)\ =\ \mu_{\rm max}(V)+
\mu_{\rm max}(\bigwedge\nolimits^j {\mathcal N}^*) \ < \ 0
$$
for all $1\, \leq\, j\, \leq\, d-r$. From this it follows immediately that
$$
H^0(A,\, V\otimes \bigwedge\nolimits^j{\mathcal N}^*)\ =\ 0
$$
for all $j\, \geq\, 1$.
\end{proof}

\section{Turbulent foliation  and transversely branched Cartan geometry}\label{section foliation}

Branched Cartan geometries on foliated manifolds were introduced in \cite{BD}. In this
section we recall their definition.

\subsection{Partial connection along a foliation}

Assume that $\mathcal F$ in \eqref{e2} is closed under the operation of Lie bracket of vector fields,
or in other words, $\mathcal F$ is a holomorphic foliation on $A$.

A nonsingular holomorphic foliation $\mathcal F$ on a compact complex torus $A$ such that its holomorphic tangent bundle is 
a generating subbundle in $TA$ is called a {\it smooth turbulent foliation}.

It should be emphasized that while the above notion of turbulent foliation coincides with that of Ghys for codimension one 
smooth foliations, it is less general that the definition adopted in \cite{Br2, PS} where the authors work with possibly 
singular turbulent foliations.

Let $H$ be a complex Lie group. Its Lie algebra will be denoted by $\mathfrak h$. Let
\begin{equation}\label{g1}
p\ :\ E_H\ \longrightarrow\ A
\end{equation}
be a holomorphic principal $H$--bundle on $A$. Let
\begin{equation}\label{dp}
\mathrm{d}p\ :\ TE_H\ \longrightarrow\ p^*TA
\end{equation}
be the differential of the projection map $p$ in \eqref{g1}. The quotient
$$
\text{ad}(E_H)\ :=\ \text{kernel}(\mathrm{d}p)/H\ \longrightarrow\ A
$$
is the adjoint vector bundle for $E_H$. The quotient
$$
\text{At}(E_H)\ :=\ (TE_H)/H\ \longrightarrow\ A
$$
is known as the Atiyah bundle for $E_H$ \cite{At2}.
Consider the short exact sequence of holomorphic vector bundles on $E_H$
$$
0\, \longrightarrow\, \text{kernel}(\mathrm{d}p)\, \longrightarrow\,
\mathrm{T}E_H \, \stackrel{\mathrm{d}p}{\longrightarrow}\,p^*TA
\, \longrightarrow\, 0.
$$
Taking its quotient by $H$, we get the following short exact sequence of
vector bundles on $A$
\begin{equation}\label{at1}
0\, \longrightarrow\, \text{ad}(E_H)\, \xrightarrow{\,\,\,\iota''\,\,\,}\,\text{At}(E_H)\,
\xrightarrow{\,\,\,\widehat{\mathrm{d}}p\,\,\,}\, TA\, \longrightarrow\, 0,
\end{equation}
where $\widehat{\mathrm{d}}p$ is constructed from $\mathrm{d}p$. Now define the subbundle
\begin{equation}\label{atF}
\text{At}_{\mathcal F}(E_H)\, :=\, (\widehat{\mathrm{d}}p)^{-1}({\mathcal F})\,\subset\,
\text{At}(E_H).
\end{equation}
So from \eqref{at1} we get the short exact sequence
\begin{equation}\label{at2}
0\, \longrightarrow\, \text{ad}(E_H)\, \longrightarrow\,\text{At}_{\mathcal F}(E_H)\,
\xrightarrow{\,\,\,\mathrm{d}'p\,\,\,} \, {\mathcal F}\, \longrightarrow\, 0\, ,
\end{equation}
where $\mathrm{d}'p$ is the restriction of $\widehat{\mathrm{d}}p$
in \eqref{at1} to the subbundle $\text{At}_{\mathcal F}(E_H)$.

A partial holomorphic connection on $E_H$ in the direction of $\mathcal F$ is a holomorphic
homomorphism
$$
\theta\, :\, {\mathcal F}\, \longrightarrow\, \text{At}_{\mathcal F}(E_H)
$$
such that $\mathrm{d}'p\circ\theta\,=\, \text{Id}_{\mathcal F}$, where
$\mathrm{d}'p$ is the homomorphism in \eqref{at2}. Giving such a homomorphism
$\theta$ is equivalent to giving a homomorphism
$\varpi\, :\, \text{At}_{\mathcal F}(E_H)\, \longrightarrow\, \text{ad}(E_H)$ such that the
composition of maps
$$
\text{ad}(E_H) \,\hookrightarrow\,
\text{At}_{\mathcal F}(E_H)\, \stackrel{\varpi}{\longrightarrow}\, \text{ad}(E_H)
$$
coincides with the identity map of $\text{ad}(E_H)$, where the inclusion of $\text{ad}(E_H)$ in
$\text{At}_{\mathcal F}(E_H)$ is the injective homomorphism in \eqref{at2}.

Given a partial connection $\theta\, :\, {\mathcal F}\, \longrightarrow\, \text{At}_{\mathcal F}(E_H)$,
and any two locally defined holomorphic sections $s_1$ and $s_2$ of $\mathcal F$, consider
the locally defined section $\varpi ([\theta(s_1),\, \theta(s_2)])$ of $\text{ad}(E_H)$. This defines an
${\mathcal O}_A$--linear homomorphism
$$
{\mathcal K}(\theta) \, \in\, H^0(A,\, \text{Hom}(\bigwedge\nolimits^2{\mathcal F},\, \text{ad}(E_H)))
\,=\, H^0(A,\, \text{ad}(E_H)\otimes \bigwedge\nolimits^2{\mathcal F}^*),
$$
which is called the \textit{curvature} of the connection $\theta$. The connection $\theta$ is
called \textit{flat} (or \textit{integrable}) if ${\mathcal K}(\theta)$ vanishes identically.

The image of a  flat connection $\theta$ in the subbundle  $\text{At}_{\mathcal F}(E_H)$  is a $H$-invariant foliation lifting $\mathcal F$.

Recall that a (flat partial)  connection on the principal $H$-bundle   $E_H$  induces a canonical (flat partial) connection on  any associated bundle obtained through a representation of the structural group $H$. In particular, it induces a (flat partial)   
 connection on the adjoint bundle $\text{ad}(E_H)$.

For any partial connection $\theta$ on $E_H$, let
\begin{equation}\label{tp}
\theta'\ :\ {\mathcal F}\ \longrightarrow\ \text{At}(E_H)
\end{equation}
be the homomorphism given by the composition of maps $${\mathcal F}\, \stackrel{\theta}{\longrightarrow}\,
\text{At}_{\mathcal F}(E_H)\, \hookrightarrow\, \text{At}(E_H).$$
Note that from \eqref{at1} we have an exact sequence
\begin{equation}\label{at3}
0\, \longrightarrow\, \text{ad}(E_H)\, \stackrel{\iota'}{\longrightarrow}\, \text{At}(E_H)/\theta'({\mathcal F})
\, \xrightarrow{\,\,\,\widehat{\mathrm{d}}p\,\,\,}\, TA/{\mathcal F}\, =\, {\mathcal N}\, \longrightarrow\, 0\, ,
\end{equation}
where $\iota'$ is given by $\iota''$ in \eqref{at1}.

Consider the quotient ${\mathcal N}$ in \eqref{e4}. It is known that the normal bundle $\mathcal N$ to a holomorphic 
foliation $\mathcal F$ always admit a flat holomorphic partial connection ${\nabla}^{\mathcal F}$ in the direction of 
$\mathcal F$ called the {\it Bott connection}. For a local holomorphic section $V$ of $TA$ and a local holomorphic section 
$N$ of $\mathcal N$, the Bott connection operates as ${\nabla}^{\mathcal F}_VN\, =\,q(\lbrack V,\, \widetilde N\rbrack)$,
where $\widetilde N$ is a local holomorphic section of $TA$ lifting $N$ and $q$ is the quotient map defined
in \eqref{e4b}: this does not 
depend on the chosen local lift $\widetilde N$ and defines a flat holomorphic partial connection in the direction of $\mathcal 
F$ (see, for instance, \cite[Section 11.5]{BD2}).

\begin{lemma}[{\cite[p.~40, Lemma 2.1]{BD}}]\label{lem2}
Let $\theta$ be a flat partial connection on $E_H$. Then $\theta$ produces a flat partial
connection on ${\rm At}(E_H)/\theta'({\mathcal F})$ that satisfies the condition that the
homomorphisms in the exact sequence \eqref{at3} are partial  connection preserving (where  the adjoint bundle $\text{ad}(E_H)$ is endowed with the partial  connection induced from $(E_H, \theta)$ and $\mathcal N$ is equipped with the Bott connection ${\nabla}^{\mathcal F}$).
\end{lemma}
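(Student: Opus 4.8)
The plan is to exploit the Lie algebroid structure carried by the Atiyah bundle. Recall that the sheaf of sections of $\text{At}(E_H)\,=\,(TE_H)/H$ is the sheaf of $H$--invariant holomorphic vector fields on $E_H$, and as such it is closed under the Lie bracket of vector fields; moreover the anchor $\widehat{\mathrm{d}}p\,:\,\text{At}(E_H)\,\longrightarrow\,TA$ of \eqref{at1} is a homomorphism of sheaves of Lie algebras (it is induced by $\mathrm{d}p$, and the bracket is functorial along the submersion $p$), while $\text{ad}(E_H)\,=\,\text{kernel}(\widehat{\mathrm{d}}p)$ is bracket-closed. Since $\theta$ is flat, the vanishing of $\mathcal K(\theta)$ says exactly that the horizontal lift $\theta'({\mathcal F})\,\subset\,\text{At}(E_H)$ is closed under the bracket, and more precisely that $[\theta'(V),\,\theta'(W)]\,=\,\theta'([V,\,W])$ for all local sections $V,\,W$ of $\mathcal F$: the vertical (i.e. $\text{ad}(E_H)$) component of the left-hand side is $\mathcal K(\theta)(V,\,W)\,=\,0$, and its anchor is $[V,\,W]$.

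Set $Q\,:=\,\text{At}(E_H)/\theta'({\mathcal F})$, the middle term of \eqref{at3}. I would define a partial connection $\nabla$ on $Q$ in the direction of $\mathcal F$ by
$$\nabla_V s\ :=\ [\theta'(V),\, \widetilde s]\ \bmod\ \theta'({\mathcal F}),$$
where $V$ is a local section of $\mathcal F$, $s$ a local section of $Q$, and $\widetilde s$ any local lift of $s$ to $\text{At}(E_H)$. The first step is well-definedness: replacing $\widetilde s$ by $\widetilde s+\theta'(W)$ changes the bracket by $[\theta'(V),\,\theta'(W)]\,=\,\theta'([V,\,W])\,\in\,\theta'({\mathcal F})$, so the class modulo $\theta'({\mathcal F})$ is unchanged; this is precisely where flatness enters. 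The Leibniz rule $\nabla_V(fs)\,=\,V(f)\,s+f\,\nabla_V s$ follows from $[\theta'(V),\,f\widetilde s]\,=\,f[\theta'(V),\,\widetilde s]+(V(f))\,\widetilde s$, using that the anchor of $\theta'(V)$ is $V$; and $\mathcal O_A$--linearity in $V$ follows from $[\,f\theta'(V),\,\widetilde s\,]\,=\,f[\theta'(V),\,\widetilde s]-(\widehat{\mathrm{d}}p(\widetilde s)(f))\,\theta'(V)$, whose correction term is a multiple of $\theta'(V)\,\in\,\theta'({\mathcal F})$ and hence vanishes in $Q$. Flatness of $\nabla$ is then a direct consequence of the Jacobi identity in $\text{At}(E_H)$ together with $\theta'([V,\,W])\,=\,[\theta'(V),\,\theta'(W)]$.

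It remains to check that the two homomorphisms of \eqref{at3} are connection preserving. For $\iota'$ this is immediate: the connection on $\text{ad}(E_H)$ induced by $(E_H,\,\theta)$ is $\xi\,\longmapsto\,[\theta'(V),\,\xi]$, and since $\text{ad}(E_H)$ is bracket-closed this agrees with $\nabla$ restricted to $\iota'(\text{ad}(E_H))$. For the anchor $\widehat{\mathrm{d}}p$ I would use that it is a homomorphism of sheaves of Lie algebras: lifting $s$ to $\widetilde s$, one computes $\widehat{\mathrm{d}}p(\nabla_V s)\,=\,q(\widehat{\mathrm{d}}p([\theta'(V),\,\widetilde s]))\,=\,q([V,\,\widehat{\mathrm{d}}p(\widetilde s)])$, and since $\widehat{\mathrm{d}}p(\widetilde s)$ is a lift to $TA$ of $\widehat{\mathrm{d}}p(s)\,\in\,\mathcal N$, the right-hand side is by definition $\nabla^{\mathcal F}_V(\widehat{\mathrm{d}}p(s))$, the Bott connection.

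I expect the main obstacle to be the bookkeeping that guarantees every bracket descends to the quotient $Q$ — that is, the repeated use of flatness to annihilate vertical correction terms, and the verification that $\widehat{\mathrm{d}}p$ is a Lie algebra homomorphism so that the Bott connection appears exactly. Once the Atiyah algebroid bracket is set up these are all short computations, but the identity $[\theta'(V),\,\theta'(W)]\,=\,\theta'([V,\,W])$ must be invoked at each stage, including for sections carrying holomorphic-function coefficients, where one verifies it survives by the derivation property of the anchor.
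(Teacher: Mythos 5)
Your proof is correct and complete: flatness of $\theta$ is indeed equivalent to the identity $[\theta'(V),\,\theta'(W)]\,=\,\theta'([V,\,W])$, which is exactly what makes the bracket-induced connection $\nabla_V s\,=\,[\theta'(V),\,\widetilde s]\ \bmod\ \theta'({\mathcal F})$ well defined on ${\rm At}(E_H)/\theta'({\mathcal F})$, flat (via the Jacobi identity), compatible along $\iota'$ with the induced connection on ${\rm ad}(E_H)$ (correctly identified as bracketing with the horizontal lift), and compatible with the Bott connection on $\mathcal N$ (since $H$--invariant fields on $E_H$ are $p$-related to their images, so $\widehat{\mathrm{d}}p$ preserves brackets). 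Note that the paper itself gives no proof of this lemma --- it is quoted from \cite{BD} --- and your Lie-algebroid, quotient-Bott construction is precisely the standard argument used there, so there is no substantive divergence to report.
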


\subsection{Transversely branched Cartan geometry}

Let $G$ be a connected complex Lie group and $H\, \subset\, G$ a complex Lie subgroup.
The Lie algebra of $G$ will be denoted by $\mathfrak g$. As in \eqref{g1}, $E_H$
is a holomorphic principal $H$--bundle on $A$. Let
\begin{equation}\label{eg}
E_G\,=\, E_H\times^H G\,\longrightarrow\, A
\end{equation}
be the holomorphic principal $G$--bundle on $A$ obtained by extending the structure group of $E_H$ using
the inclusion of $H$ in $G$. The inclusion of $\mathfrak h$ in $\mathfrak g$ produces a fiber-wise
injective homomorphism of Lie algebra bundles
\begin{equation}\label{i1}
\iota\, :\, \text{ad}(E_H)\,\longrightarrow\,\text{ad}(E_G),
\end{equation}
where $\text{ad}(E_G)\,=\, E_G\times^G{\mathfrak g}$ is the adjoint bundle for $E_G$.
Let $\theta$ be a flat partial connection on $E_H$ in the direction of 
$\mathcal F$. So $\theta$ induces flat partial connections on the associated bundles $E_G$, 
$\text{ad}(E_H)$ and $\text{ad}(E_G)$.

A  {\it  transversely branched holomorphic Cartan geometry}  of type $(G,\, H)$
on the foliated torus $(A,\, {\mathcal F})$ is
\begin{itemize}
\item a holomorphic principal $H$--bundle $E_H$ on $A$ equipped with a flat partial
connection $\theta$ in the direction of $\mathcal F$, and

\item a holomorphic homomorphism
\begin{equation}\label{beta}
\beta\,:\, \text{At}(E_H)/\theta'({\mathcal F})\, \longrightarrow\,
\text{ad}(E_G),
\end{equation}
\end{itemize}
(see \eqref{tp} for $\theta'$) such that the following three conditions hold:
\begin{enumerate}
\item $\beta$ is partial connection preserving,

\item $\beta$ is an isomorphism over a nonempty open subset of $A$, and

\item the following diagram is commutative:
\begin{equation}\label{cg1}
\begin{matrix}
0 &\longrightarrow & \text{ad}(E_H) &\stackrel{\iota'}{\longrightarrow} &
\text{At}(E_H)/\theta'({\mathcal F}) &
\longrightarrow & {\mathcal N} &\longrightarrow & 0\\
&& \Vert &&~ \Big\downarrow\beta && ~ \Big\downarrow\overline{\beta}\\
0 &\longrightarrow & \text{ad}(E_H) &\stackrel{\iota}{\longrightarrow} &
\text{ad}(E_G) &\longrightarrow &
\text{ad}(E_G)/\text{ad}(E_H) &\longrightarrow & 0
\end{matrix}
\end{equation}
\end{enumerate}
where the top exact sequence is the one in \eqref{at3}, and $\iota$ is the homomorphism
in \eqref{i1}.

Notice that condition (2) in the above definition implies that the codimension of the foliation $\mathcal F$ coincides with 
the dimension of the homogeneous model space $G/H$.

Let $n$ be the complex dimension of $\mathfrak g$. Consider the homomorphism of $n$-th
exterior products
$$
\bigwedge\nolimits^n\beta\, :\, \bigwedge\nolimits^n(\text{At}(E_H)/\theta'({\mathcal F}))
\, \longrightarrow\, \bigwedge\nolimits^n\text{ad}(E_G)
$$
induced by $\beta$ in \eqref{beta}. The divisor $\text{Div}(\bigwedge\nolimits^n\beta)$
is called the \textit{branching divisor} for $((E_H,\, \theta),\, \beta)$.
We call $((E_H,\, \theta),\, \beta)$ a  {\it holomorphic
Cartan geometry}  if $\beta$ is an isomorphism over the entire $A$.

\subsection{Connection on $E_G$}

Let $((E_H,\, \theta),\, \beta)$ be a transversely branched Cartan geometry of type 
$(G,\, H)$ on the foliated manifold $(A,\, {\mathcal F})$. Consider the homomorphism
\begin{equation}\label{eh}
\text{ad}(E_H)\,\longrightarrow\, {\rm ad}(E_G)
\oplus \text{At}(E_H)\, , \ \ v \, \longmapsto\,
(\iota(v),\, -\iota''(v));
\end{equation}
see \eqref{i1} and \eqref{at1} for $\iota$ and $\iota''$ respectively.
The corresponding quotient $({\rm ad}(E_G)\oplus \text{At}(E_H))/\text{ad}(E_H)$
is identified with the Atiyah bundle
${\rm At}(E_G)$. Let
$$
\beta'\ :\ {\rm At}(E_H)\ \longrightarrow \ \text{ad}(E_G)
$$
be the composition of homomorphisms
$$
{\rm At}(E_H)\, \longrightarrow\, \text{At}(E_H)/\theta'({\mathcal F})
\,\stackrel{\beta}{\longrightarrow}\, \text{ad}(E_G),
$$
where the first homomorphism is the quotient map, and $\theta'$
(respectively, $\beta$) is the one in \eqref{tp} (respectively, \eqref{beta}). The homomorphism
\begin{equation}\label{hv}
{\rm ad}(E_G)\oplus \text{At}(E_H)\, \longrightarrow\, {\rm ad}(E_G)\, , \ \
(v,\, w) \, \longmapsto\, v+\beta'(w)
\end{equation}
vanishes on the image of $\text{ad}(E_H)$ by the map in \eqref{eh}. Therefore,
the homomorphism in \eqref{hv} produces a homomorphism
\begin{equation}\label{vp}
\varphi\, :\, \text{At}(E_G)\,=\, ({\rm ad}(E_G)\oplus \text{At}(E_H))/\text{ad}(E_H)
\,\longrightarrow\, \text{ad}(E_G)\, .
\end{equation}
The composition
$$
\text{ad}(E_G)\,\hookrightarrow\, \text{At}(E_G)\, \stackrel{\varphi}{\longrightarrow}\,\text{ad}(E_G)
$$
clearly coincides with the identity map of $\text{ad}(E_G)$. Consequently, $\varphi$ defines a holomorphic
connection on the principal $G$--bundle $E_G$ \cite{At2}. Let
${\rm Curv}(\varphi)\, \in\, H^0(A,\, \text{ad}(E_G)\otimes\Omega^2_A)$
be the curvature of the connection $\varphi$ on $E_G$.

\begin{lemma}[{\cite[p.~42, Lemma 3.1]{BD}}]\label{lem3}
The curvature ${\rm Curv}(\varphi)$ lies in the image of the homomorphism
$$
H^0(A,\, {\rm ad}(E_G)\otimes\bigwedge\nolimits^2 {\mathcal N}^*)
\, \hookrightarrow\, H^0(A,\, {\rm ad}(E_G)\otimes\Omega^2_A)
$$
given by the inclusion $q^*\, :\, {\mathcal N}^*\, \hookrightarrow\,
\Omega^1_A$ (the dual of the projection in \eqref{e4b}).
\end{lemma}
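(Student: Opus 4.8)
The plan is to unwind the statement into a pointwise vanishing condition and then read off the two non-transverse components of the curvature from the way $\varphi$ was built out of $\theta$ and $\beta$. First I would record the linear algebra fact underlying the assertion: under the inclusion $q^*\colon \mathcal N^*\hookrightarrow\Omega^1_A$, the subbundle $\bigwedge^2\mathcal N^*\subset\Omega^2_A$ consists exactly of those $2$-forms $\psi$ with $\iota_X\psi=0$ for every local section $X$ of $\mathcal F$ (this is the kernel of contraction of $\bigwedge^2\Omega^1_A$ with $\mathcal F$). Hence the lemma is equivalent to the claim that $\text{Curv}(\varphi)(X,\,Y)=0$ whenever $X$ is a section of $\mathcal F$ and $Y$ is arbitrary. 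Writing $s\colon TA\to \text{At}(E_G)$ for the horizontal lift determined by $\varphi$ (so $\widehat{\mathrm{d}}p\circ s=\mathrm{Id}$ and $\varphi\circ s=0$), the curvature is $\text{Curv}(\varphi)(X,Y)=[s(X),\,s(Y)]-s([X,Y])$, in exact analogy with the formula $\mathcal K(\theta)=\varpi([\theta(s_1),\theta(s_2)])$ used above for the partial connection; so the task is to show that this element of $\text{ad}(E_G)$ vanishes when $X$ is a section of $\mathcal F$.

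The key preliminary step is to identify $s$ over $\mathcal F$ with the partial connection coming from $\theta$. Using the presentation $\text{At}(E_G)=(\text{ad}(E_G)\oplus\text{At}(E_H))/\text{ad}(E_H)$ from \eqref{vp} and the formula \eqref{hv} for $\varphi$, I would check that for a section $X$ of $\mathcal F$ the class $s_0(X)$ of $(0,\,\theta(X))$ is horizontal. Indeed $\varphi(s_0(X))=\beta'(\theta(X))$, and $\beta'$ factors through the quotient $\text{At}(E_H)\to\text{At}(E_H)/\theta'(\mathcal F)$ in which the image $\theta'(X)$ of $\theta(X)$ maps to $0$; hence $\varphi(s_0(X))=0$. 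Since moreover $\widehat{\mathrm{d}}p(s_0(X))=\mathrm{d}'p(\theta(X))=X$, we conclude $s|_{\mathcal F}=s_0$. In other words, along $\mathcal F$ the connection $\varphi$ is precisely the partial connection on $E_G$ induced by $\theta$.

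With this in hand the two cases split. If $Y$ is also a section of $\mathcal F$, then both $s(X)$ and $s(Y)$ are $\theta$-horizontal lifts and the flatness of $\theta$ gives $[s(X),s(Y)]=s([X,Y])$, so the $\bigwedge^2\mathcal F^*$-component of the curvature vanishes. The remaining, and genuinely harder, case is $Y$ transverse: here $s(Y)$ is governed by $\beta$ rather than by $\theta$, and I expect this to be the main obstacle, since flatness of $\theta$ alone does not suffice (a connection flat along $\mathcal F$ can still carry nonzero mixed curvature). This is where condition (1) — that $\beta$ is partial connection preserving — together with Lemma \ref{lem2} must enter: the point is that the horizontal lift $s(Y)$ is $\theta$-parallel along the leaves of $\mathcal F$, because $\beta$ intertwines the partial connection on $\text{At}(E_H)/\theta'(\mathcal F)$ with the induced partial connection on $\text{ad}(E_G)$ and the sequence \eqref{at3} is connection preserving. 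Concretely, in a local foliated gauge trivializing the flat $\theta$ in the $\mathcal F$-directions, the mixed curvature $\text{Curv}(\varphi)(X,Y)$ reduces to the $\theta$-covariant derivative along $X\in\mathcal F$ of the transverse part of $\varphi$, which is exactly (the $\beta$-image of) a $\theta$-parallel object, so this derivative vanishes. Making this last step precise — that the partial-connection-preserving property of $\beta$ forces the $\mathcal F^*\otimes\mathcal N^*$-component of $\text{Curv}(\varphi)$ to be zero — is the crux; once it is done, only the $\bigwedge^2\mathcal N^*$-component survives, which is exactly the assertion of the lemma.
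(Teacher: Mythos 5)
Your proposal is correct and follows essentially the same route as the actual proof of this statement: note that the paper gives no proof of Lemma \ref{lem3} at all but imports it from \cite[Lemma 3.1]{BD}, and the argument there is exactly yours --- reduce to showing $i_X\,{\rm Curv}(\varphi)=0$ for $X$ tangent to $\mathcal F$, identify the $\varphi$-horizontal lift over $\mathcal F$ with the lift given by $\theta$, and then use flatness of $\theta$ together with the condition that $\beta$ is partial connection preserving. The step you flag as the crux does close by a short direct computation rather than a local gauge argument: writing $s(Y)$ as the class of $(-\beta'(w_Y),\,w_Y)$ for any local lift $w_Y\in{\rm At}(E_H)$ of $Y$ (well defined by the commutativity of \eqref{cg1}), and using that the extension map ${\rm At}(E_H)\to{\rm At}(E_G)$ preserves Lie brackets, one finds ${\rm Curv}(\varphi)(X,Y)=\beta\bigl(\pi([\theta(X),\,w_Y])\bigr)-\nabla^{{\rm ad}(E_G)}_X\beta\bigl(\pi(w_Y)\bigr)$, where $\pi\,:\,{\rm At}(E_H)\to{\rm At}(E_H)/\theta'(\mathcal F)$ is the quotient map and $\nabla^{{\rm ad}(E_G)}$ is the partial connection induced by $\theta$; since $\pi([\theta(X),\,w_Y])$ is precisely the partial connection of Lemma \ref{lem2} applied to $\pi(w_Y)$, this difference vanishes exactly because $\beta$ is partial connection preserving, with no case distinction between $Y$ tangent or transverse needed.
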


The transversely branched Cartan geometry $((E_H,\, \theta),\, \beta)$ will be called
\textit{flat} (or \textit{integrable}) if the curvature ${\rm Curv}(\varphi)$ vanishes identically.

Notice that for codimension one foliations, the normal bundle $\mathcal N$ has rank one and the above curvature tensor vanishes identically. Consequently, for codimension one foliations, any transversely branched holomorphic Cartan geometry is automatically flat.

A flat transversely holomorphic Cartan geometry with model $(G,\,H)$ induces on the foliated manifold $(A,\, \mathcal F)$ a {\it 
transversely holomorphic $G/H$--structure}. This means that $A$ admits an open cover by open sets $U_i$ equipped with 
holomorphic submersions $f_i \,:\, U_i \,\longrightarrow\, G/H$ such that the restriction of the foliation $\mathcal F$ on 
$U_i$ is given by the level sets of the map $f_i$. Moreover, for each connected component of the nonempty intersections $U_i 
\cap U_j$, there exists an element $g_{ij} \,\in\, G$ such that $f_i \,=\,g_{ij} \circ f_j$.

The above description holds also for flat {\it branched} holomorphic Cartan geometry, except that the local maps $f_i \,:\, U_i 
\,\longrightarrow\, G/H$ defining the foliation $\mathcal F$ in restriction to $U_i$ are submersions at the generic point (the 
maps $f_i$ are allowed to admit a branching divisor) (see \cite{BD,BD2}).

\section{Rigidity of branched Cartan geometries on torus} \label{section rigidity}

As before, $G$ is a connected complex Lie group, and $H\, \subset\, G$ is a complex Lie subgroup and $(A,\, \mathcal F)$
is a foliated compact complex torus. Let $E_H$ be a holomorphic principal $H$--bundle on $A$ equipped with
a flat partial connection $\theta$ in the direction of $\mathcal F$.

\begin{theorem}\label{thm1}\mbox{} Assume  $\mathcal F$ is  a smooth turbulent foliation on the  compact complex torus $A$. 
\begin{enumerate}
\item There can be at most one transversely branched  holomorphic Cartan geometry of type $(G,\, H)$ on the foliated torus  $(A, \mathcal F)$ admitting $(E_H, \theta)$ as underlying  $H$-principal bundle equipped  with the flat   partial  connection $\theta$  along $\mathcal F$.

\item Any transversely branched holomorphic  Cartan geometry of type $(G,\, H)$ on the foliated torus $(A, \mathcal F)$  is flat.
\end{enumerate}
\end{theorem}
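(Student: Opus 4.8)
The plan is to reduce both assertions to the vanishing statement of Lemma \ref{lem1}, applied to the vector bundle $V\,=\,\mathrm{ad}(E_G)$. Since $\mathcal F$ is a smooth turbulent foliation it is a generating subbundle, so Lemma \ref{lem1} is available. The common input for both parts is that the extended bundle $E_G$ carries the holomorphic connection $\varphi$ of \eqref{vp}: this connection is produced from $((E_H,\,\theta),\,\beta)$ by the construction preceding Lemma \ref{lem3}, and that construction uses only the homomorphism $\beta$ together with the flat partial connection $\theta$, so it goes through verbatim in the branched case (one never needs $\beta$ to be an isomorphism). Consequently $\mathrm{ad}(E_G)$, being an associated bundle of $E_G$, inherits a holomorphic connection as well.

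The heart of the matter --- and the step I expect to be the main obstacle --- is to establish that
$$
\mu_{\rm max}(\mathrm{ad}(E_G))\ \leq\ 0 .
$$
Here the hypothesis that $A$ is a torus enters in an essential way. Since $E_G$ admits a holomorphic connection, Atiyah's theorem forces the rational Chern classes of $\mathrm{ad}(E_G)$ to vanish, so in particular $\text{degree}(\mathrm{ad}(E_G))\,=\,0$. To upgrade this to the slope estimate, let $W_1\,\subset\,\mathrm{ad}(E_G)$ be the maximal destabilizing subsheaf; it is semistable with $\mu(W_1)\,=\,\mu_{\rm max}(\mathrm{ad}(E_G))$. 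The second fundamental form of $W_1$ for the induced connection is an $\mathcal O_A$--linear map $W_1\,\longrightarrow\,(\mathrm{ad}(E_G)/W_1)\otimes\Omega^1_A$, and because $\Omega^1_A$ is holomorphically trivial on the torus one has $\mu_{\rm max}((\mathrm{ad}(E_G)/W_1)\otimes\Omega^1_A)\,=\,\mu_{\rm max}(\mathrm{ad}(E_G)/W_1)\,<\,\mu(W_1)$; hence this homomorphism must vanish. Therefore $W_1$ is preserved by the connection and so itself carries a holomorphic connection, forcing $\text{degree}(W_1)\,=\,0$ and thus $\mu_{\rm max}(\mathrm{ad}(E_G))\,=\,0$. (Equivalently, one may simply invoke the known fact that a holomorphic bundle on a compact complex torus admitting a holomorphic connection is semistable of degree zero.)

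With $\mu_{\rm max}(\mathrm{ad}(E_G))\,\leq\,0$ in hand, part (2) is immediate: by Lemma \ref{lem1} applied with $V\,=\,\mathrm{ad}(E_G)$ and $j\,=\,2$ one gets $H^0(A,\,\mathrm{ad}(E_G)\otimes\bigwedge^2{\mathcal N}^*)\,=\,0$, while Lemma \ref{lem3} places the curvature $\mathrm{Curv}(\varphi)$ inside the image of this group in $H^0(A,\,\mathrm{ad}(E_G)\otimes\Omega^2_A)$. Hence $\mathrm{Curv}(\varphi)\,=\,0$, that is, the Cartan geometry is flat.

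For part (1), I would let $\beta_1$ and $\beta_2$ both define transversely branched Cartan geometries of type $(G,\,H)$ with the same underlying $(E_H,\,\theta)$, and observe that $E_G\,=\,E_H\times^H G$ depends only on $(E_H,\,\theta)$. The commutativity of \eqref{cg1} says that $\beta_1$ and $\beta_2$ both restrict to $\iota$ on $\mathrm{ad}(E_H)$, so their difference $\beta_1-\beta_2$ vanishes on $\mathrm{ad}(E_H)$ and therefore factors, through the top row of \eqref{cg1}, as a homomorphism ${\mathcal N}\,\longrightarrow\,\mathrm{ad}(E_G)$, i.e.\ as an element of $H^0(A,\,\mathrm{ad}(E_G)\otimes{\mathcal N}^*)$. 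By Lemma \ref{lem1} with $V\,=\,\mathrm{ad}(E_G)$ and $j\,=\,1$ this group is zero, whence $\beta_1\,=\,\beta_2$, giving the asserted uniqueness.
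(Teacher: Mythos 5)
Your proposal is correct, and its overall architecture coincides with the paper's: both rest on the fact that $E_G$ (hence $\mathrm{ad}(E_G)$) carries a holomorphic connection, so that $\mathrm{ad}(E_G)$ is semistable of degree zero on the torus (the paper simply cites \cite[p.~41, Theorem 4.1]{BG}, which is the ``known fact'' you mention; your second-fundamental-form argument is a correct proof of it), after which Lemma \ref{lem1} with $V=\mathrm{ad}(E_G)$ kills the relevant cohomology groups. Your part (2) is identical to the paper's. Where you genuinely diverge is part (1): the paper passes through the induced holomorphic connections $\varphi_1,\varphi_2$ on $E_G$, uses that the space of holomorphic connections is an affine space over $H^0(A,\mathrm{ad}(E_G)\otimes\Omega^1_A)$, and then observes that both $\varphi_i$ restrict along $\mathcal F$ to the same partial connection $\widetilde\theta$ induced by $\theta$, so the difference descends to a section of $\mathrm{ad}(E_G)\otimes{\mathcal N}^*$; implicitly it also uses that $\varphi$ determines $\beta$. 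You instead work directly with the defining data: since both $\beta_i$ restrict to $\iota$ on $\mathrm{ad}(E_H)$ by the commutativity of \eqref{cg1}, the difference $\beta_1-\beta_2$ kills $\iota'(\mathrm{ad}(E_H))$ and factors through the quotient $\mathcal N$ in the top row of \eqref{cg1}, landing in $H^0(A,\mathrm{ad}(E_G)\otimes{\mathcal N}^*)$ with no mention of the connections $\varphi_i$ at all. Both routes reduce to exactly the same vanishing statement; yours is the more economical for part (1) (it avoids verifying that $\beta\mapsto\varphi$ is injective and that $\varphi_1',\varphi_2'$ coincide with $\widetilde\theta$), while the paper's has the mild advantage of running parts (1) and (2) through the single object $\varphi$, which is in any case needed for part (2) via Lemma \ref{lem3}.
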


\begin{proof} 

To prove (1), let $\beta_1$ and $\beta_2$ be two transversely branched Cartan geometries of type $(G,\, H)$
on $(E_H,\, \theta)$. Consider the principal $G$--bundle $E_G$ in \eqref{eg}. Let $\varphi_1$ (respectively,
$\varphi_2$) be the holomorphic connections on $E_G$ produced by $\beta_1$ (respectively,
$\beta_2$); see \eqref{vp} for the constructions of $\varphi_1$ and $\varphi_2$. Since the space of
holomorphic connections on $E_G$ is an affine space for $H^0(A,\, \text{ad}(E_G)\otimes\Omega^1_A)$, we
have
\begin{equation}\label{e10}
\beta_1-\beta_2\ \in\ H^0(A,\, \text{ad}(E_G)\otimes\Omega^1_A).
\end{equation}

Let $\varphi'_1$ (respectively, $\varphi'_2$) be the partial connection on $E_G$
given by $\varphi_1$ (respectively, $\varphi_2$). On the other hand, the partial connection
$\theta$ on $E_H$ induces a partial connection on $E_G$, because $E_G$ is the principal $G$--bundle
on $A$ obtained by extending the structure group of $E_H$ using the inclusion map $H\, \hookrightarrow
\, G$. Let $\widetilde{\theta}$ be the partial connection on $E_G$ induced by $\theta$.
Note that $\varphi'_1$ and $\varphi'_2$ coincide with $\widetilde{\theta}$. Consequently, from
\eqref{e10} we have
\begin{equation}\label{e11}
\beta_1-\beta_2\ \in\ H^0(A,\, \text{ad}(E_G)\otimes {\mathcal N}^*);
\end{equation}
note that the homomorphism $${\rm Id}_{\text{ad}(E_G)}\otimes q^*\ :\ 
\text{ad}(E_G)\otimes {\mathcal N}^*\ \longrightarrow\
\text{ad}(E_G)\otimes\Omega^1_A,$$ where $q^*$ is the dual of the homomorphism in \eqref{e4b},
produces an injective homomorphism
$$
H^0(A,\, \text{ad}(E_G)\otimes {\mathcal N}^*)\ \longrightarrow\ 
H^0(A,\, \text{ad}(E_G)\otimes\Omega^1_A).
$$

Since $\text{ad}(E_G)$ is a holomorphic vector bundle, admitting a holomorphic connection, on a
compact complex torus, we know that $\text{ad}(E_G)$ is semistable of degree zero \cite[p.~41, Theorem 4.1]{BG}.
Hence from Lemma \ref{lem1} we know that
$$
H^0(A,\, \text{ad}(E_G)\otimes {\mathcal N}^*)\ = \ 0.$$
Now from \eqref{e11} it follows that $\beta_1\,=\, \beta_1$. This proves (1).

To prove (2), recall from Lemma \ref{lem3} that the curvature of
a transversely branched Cartan geometry of type $(G,\, H)$ on $(E_H,\, \theta)$ is a
holomorphic section of ${\rm ad}(E_G)\otimes\bigwedge\nolimits^2 {\mathcal N}^*$. Again from
Lemma \ref{lem1} we know that
$$
H^0(A,\, {\rm ad}(E_G)\otimes\bigwedge\nolimits^2 {\mathcal N}^*)\ = \ 0.$$
Hence any transversely branched Cartan geometry of type $(G,\, H)$ on $(E_H,\, \theta)$
is flat.
\end{proof}


\end{document}